\documentclass[graybox]{svmult}
\usepackage{type1cm}        %

\usepackage{makeidx}         
\usepackage{graphicx}        
\usepackage{multicol}        
\usepackage[bottom]{footmisc}
\usepackage{newtxtext}       %
\usepackage[varvw]{newtxmath}

\usepackage{array,tikz,cite,hyperref,microtype,multirow}
\newcolumntype{M}[1]{>{\centering\arraybackslash}m{#1}}
\newcolumntype{L}[1]{>{\centering\arraybackslash}m{#1}} 
\newcolumntype{P}[1]{>{\centering\arraybackslash}p{#1}} 
\makeindex 
\allowdisplaybreaks

\usepackage{tabularx,booktabs,orcidlink}

\usepackage{mathtools,enumitem,tabularray,array,subcaption,bm}
\usepackage[capitalise,nameinlink,noabbrev]{cleveref}

\usepackage{subcaption}

\AtBeginEnvironment{align}{
  \setlength{\abovedisplayskip}{4pt}    
  \setlength{\belowdisplayskip}{4pt}    
  \setlength{\abovedisplayshortskip}{0pt}
  \setlength{\belowdisplayshortskip}{0pt}
  \setlength{\jot}{1pt}                 
}

\definecolor{ricam}{RGB}{0, 68, 179}
\definecolor{math}{RGB}{107, 153, 0}

\def\div{\operatorname{div}}

\def\Th{\mathcal{T}_h}
\def\Itau{\mathcal{I}_\tau}
\def\I{\mathbf{I}}

\def\dx{\,\textrm{d}x}
\def\la{\langle}
\def\ra{\rangle}

\def\Th{\mathcal{T}_h}
\def\Vh{\mathcal{V}_h}
\def\Qh{\mathcal{Q}_h}
\def\Xh{\mathcal{X}_h}
\def\Zh{\mathcal{Z}_h}
\def\Wh{\mathcal{W}_h}

\def\vv{{\bm{v}}}

\DeclarePairedDelimiter{\snorm}{|}{|}
\DeclarePairedDelimiter{\norm}{\|}{\|}

\def\E{\mathcal{E}}

\def\w{\mathbf{w}}

\def\dt{\partial_t}

\def\dtau{d_\tau^{n+1}}
\def\avg{\textup{avg}}

\def\ddt{\frac{\mathrm{d}}{\mathrm{d}t}}

\def\softd{{\leavevmode\setbox1=\hbox{d}%
		\hbox to 1.05\wd1{d\kern-0.4ex{\char039}\hss}}}

\begin{document}
\title*{Review of thermodynamic structures and structure-preserving discretisations of Cahn--Hilliard-type models}
\titlerunning{Review of structure-preserving discretisations of Cahn--Hilliard-type models}
\author{Aaron Brunk\orcidlink{0000-0003-4987-2398},~ 
Marco F.P. ten Eikelder\orcidlink{0000-0002-1153-146X},~
Marvin Fritz\orcidlink{0000-0002-8360-7371},~
Dennis Höhn\orcidlink{0009-0006-4853-9947
},~ and
Dennis Trautwein\orcidlink{0009-0000-5136-4566}}

\institute{Aaron Brunk \at Johannes Gutenberg University Mainz, \email{abrunk@uni-mainz.de}
\and Marco F.P. ten Eikelder \at Technical University of Darmstadt, \email{marco.eikelder@tu-darmstadt.de}
\and Marvin Fritz \at Radon Institute for Computational and Applied Mathematics, \email{marvin.fritz@oeaw.ac.at}
\and Dennis Höhn \at Johannes Gutenberg University Mainz, \email{dennis.hoehn@uni-mainz.de}
\and Dennis Trautwein \at University of Regensburg, \email{dennis.trautwein@ur.de}
}

\authorrunning{A.\,Brunk; M.F.P.\,ten Eikelder; M.\,Fritz; D.\,Höhn; D.\,Trautwein}

\maketitle

\abstract{The Cahn–Hilliard equation and extensions, notably the Cahn–Hilliard–Darcy and Cahn–Hilliard–Navier–Stokes systems, provide widely used frameworks for coupling interfacial thermodynamics with flow. This review surveys the thermodynamic structures underlying these models, focusing on the formulation of free energy functionals, dissipation mechanisms, and variational principles. We compare structural properties, emphasizing how these models encode conservation laws and energy dissipation. A central theme is the translation of these thermodynamic structures into numerical practice by providing representative discretisation strategies that aim to preserve mass conservation, stability, and energy decay. Particular attention is paid to the trade-offs between accuracy, efficiency, and structure preservation in large-scale simulations.}

\section{Introduction}
Phase-field models have become a cornerstone in the mathematical description of multiphase systems, with the Cahn--Hilliard (CH) equation and its extensions serving as prototypical examples; we refer to the review articles \cite{novick2008cahn,kim2016basic,wu2022review,ten2023unified,fritz2023tumor,ten2025unified} and the references therein. These models couple thermodynamic consistency with interfacial dynamics and, when combined with fluid flow, give rise to the Cahn--Hilliard--Darcy (CHD) and Cahn--Hilliard--Navier--Stokes (CHNS) systems. A unifying feature of such models is the presence of underlying thermodynamic structures, notably conservation laws and energy dissipation, which are essential for both physical interpretability and long-time stability.

Transferring these structures to the discrete setting has been the subject of extensive research, leading to the development of structure-preserving numerical schemes. Such methods aim to retain key properties such as mass conservation and energy decay, thereby providing more reliable simulations than conventional discretisations. This survey highlights the thermodynamic framework underlying CH-type models, reviews representative discretisation strategies, and discusses the trade-offs between accuracy, efficiency, and structure preservation. The emphasis is on presenting common structural principles and illustrating how they inform the design of robust numerical methods. 

Beyond the CH, CHD, and CHNS systems considered here, structure-preserving methods have been studied for Cahn--Hilliard type models coupled with Biot’s equation \cite{brunk2025structure}, nonlocal interactions \cite{brunk2024analysis}, magnetohydrodynamics \cite{shi2024structure}, Poisson--Nernst--Planck dynamics \cite{qian2023convergence}, reaction--diffusion systems \cite{wang2025stability}, Forchheimer's equation \cite{brunk2025analysis} and lymphangiogenesis models \cite{wang2025structure,jungel2024structure}. 

\section{Models}
The mixed forms of the Cahn--Hilliard type models read:
\begin{center}\textbf{Cahn--Hilliard (CH) system} \end{center}
\begin{align}
  \dt\phi - \div(m(\phi)\nabla\mu) &= 0, \qquad \mu + \gamma\Delta\phi - f'(\phi) = 0\label{eq:ch}
\end{align}

\begin{center}\textbf{Cahn--Hilliard--Darcy (CHD) system} \end{center}
\begin{subequations}
\begin{align}
  \dt\phi + \div(\phi\vv) - \div(m(\phi)\nabla\mu) &= 0,  \qquad \mu + \gamma\Delta\phi - f'(\phi)  = 0,\label{eq:chd1}\\
  \alpha(\phi)\vv + \nabla p + \div(\gamma\nabla\phi\otimes\nabla\phi) &= 0, \qquad \div(\vv) = 0, \label{eq:chd2}
  \end{align}
\end{subequations}

\begin{center}\textbf{Cahn--Hilliard--Navier--Stokes (CHNS) system} \end{center}
\begin{subequations}
\begin{align}
  \dt\phi &+ \div(\phi\vv) - \div(m(\phi)\nabla\mu) = 0, \qquad \mu + \gamma\Delta\phi - f'(\phi)  = 0, \label{eq:chns1}\\
  \dt\vv &+ \div(\vv\otimes\vv) - \div(\eta(\phi)\mathrm{D}\vv) + \nabla p + \div(\gamma\nabla\phi\otimes\nabla\phi) = 0, \qquad \div(\vv) = 0,\label{eq:chns2}
\end{align}
\end{subequations}
in domain $\Omega \subset \mathbb{R}^d$ (dimension $d=2,3$). In these models, $\phi \in [0,1]$ denotes the phase field variable, which represents an order parameter (e.g. volume fraction, mass fraction or concentration), $\mu$ is the chemical potential, $\vv$ is the mixture velocity, and $p$ is the Lagrange multiplier pressure. Furthermore, $m=m(\phi)$ is the mobility function, $f=f(\phi)$ is the gradient-free part of the Helmholtz free energy density, $\gamma$ is a parameter related to the interfacial width, and $\eta = \eta(\phi)$ is the viscosity. The CH equation is a diffuse interface model for phase separation in binary mixtures. It evolves an order parameter that captures spinodal decomposition and coarsening (Eq.~\eqref{eq:ch}). The CHD model describes a system that involves both phase separation/coarsening (Eq.~\eqref{eq:chd1}) and porous medium flow (Eq.~\eqref{eq:chd2}). The CHNS system models phase separation (Eq.~\eqref{eq:chns1}) and incompressible hydrodynamics (Eq.~\eqref{eq:chns2}). The systems have to be complemented by boundary and initial conditions. In this review, we consider for simplicity periodic boundary conditions. From a physical perspective, the CHNS system is the most complete model of these three. In certain regimes where inertia is not dominant, certain model reductions/approximations, like the CHD system,
appear to be valid. Whenever the momentum is negligible, the standard Cahn-Hilliard system is a suitable reduction.

We proceed with the physical structures encoded in these systems. We define the mass, linear momentum, angular momentum, and energy, respectively, as: 
\begin{subequations}
    \begin{align*}
      M =~ \int_\Omega \phi \dx,\qquad
      \mathbf{L} =~ \int_\Omega \vv\dx,\qquad
      \mathbf{P} =~ \int_\Omega \vv\times \mathbf{x} \dx,\qquad
      E_\beta =~ \int_\Omega \Psi + \frac{\beta}{2}\snorm{\vv}^2 \dx,
    \end{align*}
\end{subequations}
where the free energy density is $\Psi = \frac{\gamma}{2}\snorm{\nabla\phi}^2 + f(\phi)$, and where $\beta=0$ for CH and CHD, and $\beta=1$ for CHNS. In Table \ref{tab:structures}, we summarise the conservation and dissipation structures of the models. 
\newcolumntype{A}{>{\hsize=3\hsize\raggedright\arraybackslash}X}
\newcolumntype{S}{>{\hsize=14\hsize\raggedright\arraybackslash}X}
\newcolumntype{F}{>{\hsize=27\hsize\raggedright\arraybackslash}X}

\begin{table}[!htbp]
  \centering
  \begin{tabularx}{\textwidth}{@{} A S F @{}}
    \toprule
    Model & Structure & Formula \\
    \midrule
    \multirow{2}{*}{CH}   & Mass conservation  & $\ddt M = 0$ \\[2pt]
                          & Energy dissipation & $\ddt E_0 = - \la m(\phi)\nabla\mu,\nabla\mu \ra \le 0$ \\
    \addlinespace
    \multirow{2}{*}{CHD}  & Mass conservation  & $\ddt M = 0$ \\
                          & Energy dissipation & $\ddt E_0 = - \la \alpha(\phi)\vv,\vv \ra - \la m(\phi)\nabla\mu,\nabla\mu \ra \le 0$ \\
    \addlinespace
    \multirow{3}{*}{CHNS} & Mass conservation  & $\ddt M = 0$ \\
                          & Linear/angular momentum cons. & $\ddt \mathbf{L} = 0$ \qquad  $\ddt \mathbf{P} = 0$ \\
                          & Energy dissipation & $\ddt E_1 = - \la m(\phi)\nabla\mu,\nabla\mu \ra - \la \eta(\phi)\mathrm{D}\vv,\mathrm{D}\vv \ra \le 0$ \\
    \bottomrule
  \end{tabularx}
  \caption{Physical structures for CH, CHD, and CHNS.}
  \label{tab:structures}
\end{table}
The mass conservation equations follow from the integration of the phase-field equation over $\Omega$, subsequently integrating by parts, and applying the periodic boundary conditions. Next, linear and angular momentum conservation in the CHNS model follows analogously as in the classical Navier-Stokes equations, see e.g. a standard textbook on continuum mechanics such as \cite{tadmor2012continuum}. Finally, the energy-dissipation laws result from choosing the appropriate weights, taking the superposition, and integrating. The weights are $\mu$ for the $\phi$ equation, $-\dt\phi$ for the $\mu$, $\vv$ for the $\vv$ equation and $p$ for the divergence equation. We note that to establish the energy-dissipation law of CHD/CHNS, we used the following identity for the Korteweg stress $\div(\gamma\nabla\phi\otimes\nabla\phi) =  \nabla \Psi - \mu \nabla\phi. $ 

\section{Discretisation}
We assume that $\Omega \subset \mathbb R^d$ is a convex polyhedral domain and $\Th$ is assumed to be a globally quasi-uniform triangulation of $\Omega$. The phase-field and its chemical potential are discretised by continuous piece-wise linear elements, while for velocity-pressure we use standard LBB stable spaces; e.g. Taylor--Hood elements, see \cite{boffi2013mixed} for further details. We denote by $\mathbb{P}_k=\{ v \in L^2(\Omega) : v|_K \in \mathbb{P}_k(K) \ \forall K \in \Th \}$ the finite element space of piecewise polynomials of degree $k$, and by $\mathbb{P}_k^c=\{ v \in H^1(\Omega) : v|_K \in \mathbb{P}_k(K)
     \ \forall K \in \Th \}$ the globally continuous version. We choose the spaces
\begin{align*}
	\Vh &:= \mathbb{P}_1^c, \qquad \Xh :=(\mathbb{P}_2^c)^d, \qquad \Qh := \mathbb{P}_1^c, \qquad \Wh := \mathbb{P}_1\\
    \Zh &:= \left\{ \mathbf{v} \in [L^2(\Omega)]^d : \nabla\cdot \mathbf{v} \in L^2(\Omega),~
\mathbf{v}|_K \in [\mathbb{P}_2(K)]^d \ \ \forall K \in \mathcal{T}_h \right\}.
\end{align*}
We divide the time interval $[0,T]$, $T>0$ fixed, into uniform steps with step size $\tau>0$ and introduce $\Itau:=\{0=t^0,t^1=\tau,\ldots, t^{n_T}=T\}$, where $n_T=\tfrac{T}{\tau}$ is the absolute number of time steps. We denote by $\Pi^1_c(\Itau;X)$ and $\Pi^0(\Itau;X)$ the spaces of continuous piecewise linear and piecewise constant functions on $\Itau$ with values in the space $X$, respectively. By $g^{n+1}$ and $g^n$ we denote the evaluation of a function $g$ in $\Pi^1_c(\Itau;X)$ or $\Pi^0(\Itau;X)$ at $t=\{t^{n+1},t^n\}$, respectively, and write $I_n=(t^n,t^{n+1})$. The discrete-time derivative is denoted by $d^{n+1}_\tau g := \frac{g^{n+1}-g^n}{\tau}$. We note that we use the time-averaged method to approximate the potential derivative, that is,
\begin{align*}
    f'(\phi^{n+1},\phi^n) := \frac{1}{\tau}\int_{t^n}^{t^{n+1}} f'(\phi^{n,n+1}_\avg(s)) \mathrm{d}s, \quad \phi^{n,n+1}_\avg(s) = \frac{\phi^{n+1}-\phi^n}{\tau}(s-t^n) + \phi^n.
\end{align*}

We briefly remark on the above choice of time integration of the potential derivative. In general the double-well potential considered for Cahn-Hilliard systems are neither convex nor concave. Hence, low-order methods like implicit or explicit Euler fail to be unconditionally energy-stable, \cite{Shen2010}. Conditional stability can be achieved using structural assumption on the potential. However, such restrictions are unfavoured latest when considering long-time simulations. In the lowest order case unconditional stability can be achieved using convex-concave decomposition or stabilised methods and similar extensions for higher-order methods \cite{Shen2010,gomez2011provably,GuillnGonzlez2014}. Both methods introduce numerical diffusion in time. Our approach utilise exact integration in time and hence no numerical diffusion is introduced. The method is quite similar to the averaged vector-field method \cite{Gonzalez1996,McLachlan}. 
\subsection{Cahn--Hilliard system}

Let $\phi_{h}^0\in \Vh$ be given; seek functions $(\phi_h,\mu_h)\in \Pi_c^1(\Itau;\Vh)\times\Pi^0(\Itau;\Vh)$ such that
\begin{align}
  \la \dtau\phi_h,\psi_h \ra & + \la m(\phi_h^{n+1})\nabla\mu_h^{n+1},\nabla\psi_h \ra = 0, \label{eq:CHscheme1}
  \\
  \la \mu_h^{n+1},\xi_h \ra &- \gamma\la \nabla\phi_h^{n+1},\nabla\xi_h \ra - \la f'(\phi_h^{n+1},\phi_h^n),\xi_h \ra = 0,\label{eq:CHscheme2}
\end{align}
holds for all $(\psi_h,\xi_h)\in\Vh\times\Vh$ and for all $0\leq n < n_T.$ This scheme is fully implicit in the interfacial terms, uses a time-averaged treatment of the bulk potential, and yields a discrete energy law mirroring the continuous one (see Theorem~\ref{thm:main}).

We note that a broad variety of structure-preserving discretisations have been developed for CH to guarantee (at least) energy stability. Discontinuous Galerkin schemes have been proposed, see \cite{wimmer2025structure,acosta2023upwind}, as well as upwind-based SAV (Scalar Auxiliary Variable) approaches \cite{huang2023structure}. Energy-stable formulations with dynamic boundary conditions are studied in \cite{fukao2017structure,okumura2022second,okumura2024structure}, while further SAV-type schemes have been analysed in, e.g., \cite{metzger2021efficient, metzger2023convergent, metzger2025convergent}. The CH equation has also been reformulated in a port-Hamiltonian framework \cite{bendimerad2023structure, altmann2025structure}, and more recently, machine-learning-based discretisations with built-in structure preservation have been explored \cite{chu2024structure}. Anisotropic variants have been addressed in \cite{li2025structure}, while optimization-based reformulations lead to alternative stable discretisations \cite{ding2023structure}. Finite volume schemes with unconditional energy stability are developed and analysed in \cite{bailo2023unconditional,bailo2020convergence}.

Beyond energy-stable schemes, there are other structures of interest that one could try to preserve. We mention the bound preserving property such as $\phi_h \in [0,1]$ when considering degenerate mobilities and singular potentials, see \cite{bailo2023unconditional,barrett1999finite,ten2024ostwald}. Furthermore, one may prove the attainment of a steady-state, which is true for the continuous model (see \cite{rybka1999convergence,abels2007convergence}), and it was shown in \cite{antonietti2016convergence,merlet2009convergence,brachet2022convergence} for several discretisation schemes.

\subsection{Cahn--Hilliard--Darcy system}

Let $\phi_{h}^0\in \Vh$ be given; seek function $\phi_h\in \Pi_c^1(\Itau;\Vh)$ and $(\mu_h,\vv_h,p_h)\in \Pi^0(\Itau;\Vh\times\Zh\times\Wh)$ such that
\begin{align}
  \la \dtau\phi_h,\psi_h \ra &- \la \phi_h^{n+1}\vv_h^{n+1}, \nabla\psi_h\ra + \la m(\phi_h^{n+1})\nabla\mu_h^{n+1},\nabla\psi_h \ra = 0, \label{eq:CHDscheme1}
  \\
  \la \mu_h^{n+1},\xi_h \ra &- \gamma\la \nabla\phi_h^{n+1},\nabla\xi_h \ra - \la f'(\phi_h^{n+1},\phi_h^n),\xi_h \ra = 0,\label{eq:CHDscheme2}
  \\
  \la \alpha(\phi_h^{n+1})\vv_h^{n+1},\w_h \ra &- \la p_h^{n+1},\div(\w_h)\ra + \la \phi_h^{n+1}\nabla\mu_h^{n+1},\w_h \ra = 0, \label{eq:CHDscheme3}
  \\
  \la \div(\vv_h^{n+1}),q_h \ra & = 0 \label{eq:CHDscheme4}
\end{align}
holds for all $(\psi_h,\xi_h,\w_h,q_h)\in\Vh\times\Vh\times\Zh\times\Wh$ and for all $0\leq n < n_T.$

Similar to the classical CH equation, a broad variety of structure-preserving discretisations have been developed for the CHD system and related variants. Several finite element schemes achieve first- or second-order temporal accuracy, while maintaining energy stability and often allowing decoupled solution strategies \cite{han2016_CHD_decoupled, han_wang2018_CHD, zhang2021_CHD}.
An energy-stable and well-posed FE scheme was also proposed for the related CH--Forchheimer system \cite{brunk2025analysis}. Other FE-based studies provide energy-stable approximations and error estimates for CHD models with additional Stokes-type stabilisation terms \cite{diegel_feng_wise2015_CHD, chen2022_CHD}.
Alternative SAV formulations yield linear, unconditionally energy-stable schemes for CHD systems \cite{yang2021_CHD_msav, yao2024_CHD_SAV, zheng2021_CHD_SAV}.
In the context of finite difference methods, both first- and second-order time-accurate energy-stable schemes have been constructed and analysed \cite{wise2010_CHD, chen2018_CHD}.
A rigorous analytical framework for the CHD system and its finite element approximation was established in \cite{feng_wise2012_CHD}, confirming the continuous and discrete energy dissipation properties.
Finally, a local DG method was proposed in \cite{guo2014_CHD_localDG}, which also preserves a discrete energy law while offering flexibility for complex geometries.


\subsection{Cahn--Hilliard--Navier--Stokes system}
Let $(\phi_{h}^0,\vv_{h}^0)\in \Vh\times\Xh$ be given. We seek function $(\phi_h,\vv_h)\in \Pi_c^1(\Itau;\Vh\times\Xh)$ and $(\mu_h,p_h)\in \Pi^0(\Itau;\Vh\times\Qh)$ such that
\begin{align}
  \la \dtau\phi_h,\psi_h \ra &- \la \phi_h^{n+1}\vv_h^{n+1}, \nabla\psi_h\ra + \la m(\phi_h^{n+1})\nabla\mu_h^{n+1},\nabla\psi_h \ra = 0, \label{eq:CHNSscheme1}
  \\
  \la \mu_h^{n+1},\xi_h \ra &- \gamma\la \nabla\phi_h^{n+1},\nabla\xi_h \ra - \la f'(\phi_h^{n+1},\phi_h^n),\xi_h \ra = 0,\label{eq:CHNSscheme2}
  \\
  \la \dtau\vv_h,\w_h \ra &+  \mathbf{c}_{skw}(\vv_h^{n+1},\vv_h^{n+1},\w_h) + \la \eta(\phi_h^{n+1})\mathrm{D}\vv_h^{n+1},\mathrm{D}\w_h \ra \notag
  \\
  &- \la p_h^{n+1},\div(\w_h)\ra + \la \phi_h^{n+1}\nabla\mu_h^{n+1},\w_h \ra = 0, \label{eq:CHNSscheme3}
  \\
  \la \div(\vv_h^{n+1}),q_h \ra & = 0 \label{eq:CHNSscheme4}
\end{align}
holds for all $(\psi_h,\xi_h,\w_h,q_h)\in\Vh\times\Vh\times\Xh\times\Qh$ and for all $0\leq n < n_T.$
Here we used the skew-symmetric form 
\begin{align*}
 \mathbf{c}_{skw}(\vv_h^*,\vv_h^{n+1},\w_h):= \tfrac{1}{2}\la (\vv_h^*\cdot\nabla)\vv_h^{n+1},\w_h \ra - \tfrac{1}{2}\la (\vv_h^*\cdot\nabla)\w_h,\vv_h^{n+1} \ra.      
\end{align*}

Similarly to CH and CHD, a growing body of work develops structure-preserving discretisations for the CHNS system. For the case of matching densities, energy-stable methods are proposed in e.g. \cite{feng2006fully, diegel2017convergence, kay2007efficient, kuang2025error}. For the case of non-matching densities modelled via quasi-incompressible models, we mention a nonlinear discontinuous Galerkin energy-stable discretisation \cite{giesselmann2015energy}, a linear conditionally energy-stable continuous Galerkin method \cite{shokrpour2018diffuse}, an unconditionally energy-stable nonlinear continuous Galerkin method \cite{brunk2025simple}, and a divergence-conforming discretisation \cite{ten2024divergence}. Furthermore, the works \cite{acosta2023property, guillen2024structure} propose methods that are both energy-stable and provide bounds on the phase-field parameter.  The related Abels--Garcke--Gr\"un model was studied in \cite{grun2013convergent, grun2014two, grun2016fully,chen2025optimal} with respect to an energy-stable and or positivity-preserving scheme. Finally, we note the work of Hong et al. \cite{hong2023physics} that provides a physics-informed, structure-preserving numerical scheme for ternary hydrodynamics, and the work of Khanwale et al. \cite{khanwale2022fully} for a discretisation of CHNS for turbulence.

\subsection{Mass balance and energy dissipation}
\begin{theorem}\label{thm:main} Solutions of all schemes conserve mass $ \la \phi_h^{n+1},1 \ra = \la \phi_h^n,1 \ra$.
\begin{enumerate}
    \item[] \textbf{CH}: Every discrete solution of scheme \eqref{eq:CHscheme1}--\eqref{eq:CHscheme2} satisfies
 \begin{align*}
   E_0(\phi_h^{n+1}) + \tau\la m(\phi_h^{n+1})\nabla\mu_h^{n+1},\nabla\mu_h^{n+1} \ra \leq E_0(\phi_h^{n}). 
 \end{align*}
 \item[] \textbf{CHD}: Every discrete solution of scheme \eqref{eq:CHDscheme1}--\eqref{eq:CHDscheme4} satisfies
 \begin{align*}
   E_0(\phi_h^{n+1}) + \tau\la m(\phi_h^{n+1})\nabla\mu_h^{n+1},\nabla\mu_h^{n+1} \ra + \tau\la \alpha(\phi_h^{n+1})\vv_h^{n+1},\vv_h^{n+1} \ra \leq E_0(\phi_h^{n}). 
 \end{align*}
 \item[] \textbf{CHNS}: Every discrete solution of scheme \eqref{eq:CHNSscheme1}--\eqref{eq:CHNSscheme4} satisfies
 \begin{align*}
   \!\!\!\!\!\!\!\!\!\!E_1(\phi_h^{n+1}\vv_h^{n+1}) \!+\! \tau\la m(\phi_h^{n+1})\nabla\mu_h^{n+1},\nabla\mu_h^{n+1} \ra \!+\! \tau\la \eta(\phi_h^{n+1})\mathrm{D}\vv_h^{n+1},\mathrm{D}\vv_h^{n+1} \ra \leq E_1(\phi_h^{n},\vv_h^{n}). 
 \end{align*}
\end{enumerate}
\end{theorem}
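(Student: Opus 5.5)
Mass conservation comes from testing the phase-field equation with $\psi_h=1\in\Vh$. In \eqref{eq:CHscheme1}, \eqref{eq:CHDscheme1} and \eqref{eq:CHNSscheme1} every term except the discrete time derivative contains $\nabla\psi_h=0$. Hence $\la \dtau\phi_h,1\ra=0$, which is exactly $\la\phi_h^{n+1},1\ra=\la\phi_h^n,1\ra$. Periodic boundary conditions are used only so that no boundary terms arise; the scheme is already written in weak form.

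For the energy law of CH, I would test \eqref{eq:CHscheme1} with $\psi_h=\mu_h^{n+1}$ and \eqref{eq:CHscheme2} with $\xi_h=\dtau\phi_h$, both admissible in $\Vh$, and subtract. The terms $\la\dtau\phi_h,\mu_h^{n+1}\ra$ cancel, leaving
\begin{align*}
\gamma\la\nabla\phi_h^{n+1},\nabla\dtau\phi_h\ra+\la f'(\phi_h^{n+1},\phi_h^n),\dtau\phi_h\ra+\la m(\phi_h^{n+1})\nabla\mu_h^{n+1},\nabla\mu_h^{n+1}\ra=0.
\end{align*}
For the gradient term I use $a(a-b)=\tfrac12(a^2-b^2)+\tfrac12(a-b)^2$. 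This gives $\tfrac{\gamma}{2\tau}(\|\nabla\phi_h^{n+1}\|^2-\|\nabla\phi_h^n\|^2)$ plus the nonnegative term $\tfrac{\gamma}{2\tau}\|\nabla(\phi_h^{n+1}-\phi_h^n)\|^2$. That nonnegative term is what turns the equality into the stated inequality.

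The key step is the potential term, and it is where the time-averaged definition is used. Since $\phi_\avg$ is affine in $s$ with $\partial_s\phi_\avg=\dtau\phi_h$, the chain rule and the fundamental theorem of calculus give, pointwise in $x$,
\begin{align*}
f'(\phi_h^{n+1},\phi_h^n)\,\dtau\phi_h=\frac1\tau\int_{t^n}^{t^{n+1}}\frac{\mathrm d}{\mathrm ds}f(\phi_\avg(s))\,\mathrm ds=\frac{f(\phi_h^{n+1})-f(\phi_h^n)}{\tau}.
\end{align*}
Integrating over $\Omega$ and multiplying by $\tau$ yields the CH estimate, with no convexity assumption on $f$. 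The only regularity needed is $f\in C^1$, so that the integral is well defined and Fubini applies.

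For CHD, I additionally test \eqref{eq:CHDscheme3} with $\w_h=\vv_h^{n+1}$ and \eqref{eq:CHDscheme4} with $q_h=p_h^{n+1}$. These choices are admissible because $\vv_h^{n+1}\in\Zh$ and $p_h^{n+1}\in\Wh$. The pressure terms then cancel. The convective term $-\la\phi_h^{n+1}\vv_h^{n+1},\nabla\mu_h^{n+1}\ra$ from the phase equation cancels exactly with the Korteweg term $\la\phi_h^{n+1}\nabla\mu_h^{n+1},\vv_h^{n+1}\ra$. What remains is the dissipation term $\tau\la\alpha(\phi_h^{n+1})\vv_h^{n+1},\vv_h^{n+1}\ra$.

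For CHNS, I test \eqref{eq:CHNSscheme3} with $\w_h=\vv_h^{n+1}$, which is admissible in $\Xh$, and \eqref{eq:CHNSscheme4} with $q_h=p_h^{n+1}$. The skew-symmetric form satisfies $\mathbf c_{skw}(\vv,\vv,\vv)=0$ identically, regardless of whether the discrete velocity is exactly divergence-free. The identity $a(a-b)=\tfrac12(a^2-b^2)+\tfrac12(a-b)^2$ turns $\la\dtau\vv_h,\vv_h^{n+1}\ra$ into the kinetic-energy difference plus a nonnegative remainder. The viscous term gives the $\eta$-dissipation.

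The main point to verify carefully is that these cancellations match the signs in the scheme exactly. The coupling terms in \eqref{eq:CHDscheme1} and \eqref{eq:CHDscheme3}, and likewise in \eqref{eq:CHNSscheme1} and \eqref{eq:CHNSscheme3}, carry opposite signs when tested with $\mu_h^{n+1}$ and $\vv_h^{n+1}$ respectively. This holds as written, so the proof reduces to the algebra above.
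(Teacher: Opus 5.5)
Your proposal is correct and follows the paper's route. It obtains mass conservation from $\psi_h=1$ and uses the identity $a(a-b)=\tfrac12(a^2-b^2)+\tfrac12(a-b)^2$ to produce the nonnegative numerical-dissipation terms that are dropped. It handles the time-averaged potential by the chain rule and fundamental theorem of calculus along the affine interpolant $\phi^{n,n+1}_{h,\avg}$, and uses the test functions $\xi_h=\dtau\phi_h$, $\psi_h=\mu_h^{n+1}$, $\w_h=\vv_h^{n+1}$, $q_h=p_h^{n+1}$. The only difference is that you spell out the cancellations of the coupling, pressure and skew-symmetric convective terms, which the paper leaves to ``the same test function as at the continuous level''.
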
 

\begin{proof} Mass conservation follows from $\psi_h=1$. 
For the energy we use $(a^2-b^2)= 2(a,a-b) - (a-b)^2$ compute
\begin{align*}
 \tfrac{1}{\tau}(E_\beta(\phi_h^{n+1},\vv_h^{n+1}) -E_\beta(\phi_h^{n},\vv_h^{n})) &= \gamma\la \nabla\phi_h^{n+1},\nabla \dtau\phi_h \ra + \beta\la \dtau\vv_h,\vv_h^{n+1} \ra 
 \\
 & - \frac{\tau\gamma}{2}\norm{\dtau\nabla\phi_h}_{L^2}^2 - \frac{\tau\beta}{2}\norm{\dtau\vv_h}_{L^2}^2
 \\
 &+ \frac{1}{\tau}\la f(\phi_h^{n+1})-f(\phi_h^n),1 \ra
\end{align*}

The second line can be estimated by zero and for the third line we use mean-value theorem to obtain
\begin{align*}
 \tfrac{1}{\tau}\la f(\phi_h^{n+1})-f(\phi_h^n),1 \ra &=  \tfrac{1}{\tau}\int_{t^n}^{t^{n+1}} \dt f(\phi^{n,n+1}_{h,\avg}(s)) \mathrm{d}s 
 \\
 &= \tfrac{1}{\tau}\int_{t^n}^{t^{n+1}}f'(\phi^{n,n+1}_{h,\avg}(s)) \dt \phi^{n,n+1}_{h,\avg}(s) \mathrm{d}s 
 \\
 &= \la \tfrac{1}{\tau}\int_{t^n}^{t^{n+1}}f'(\phi^{n,n+1}_{h,\avg}(s)) \mathrm{d}s,\dtau\phi_h \ra
 \\
 & = \la f'(\phi_h^{n+1},\phi_h^n),\dtau\phi_h \ra
\end{align*}
Using $\xi_h=\dtau\phi_h$, we obtain:
 \begin{align*}
   \tfrac{1}{\tau}(E_\beta(\phi_h^{n+1},\vv_h^{n+1}) -E_\beta(\phi_h^{n},\vv_h^{n})) \leq \la \dtau\phi_h, \mu_h^{n+1}\ra + \beta\la \dtau\vv_h,\vv_h^{n+1} \ra
 \end{align*} 
cf. \cite{brunk2024analysis,brunk2025analysis,brunk2025simple}, the energy inequalities follow from the same test function as at the continuous level.
\end{proof}

\section{Experiments}

The schemes of the previous section are implemented in NGSolve \cite{schberl2014c++11}. The nonlinear system is treated using Newton's method with an absolute tolerance of $10^{-11}$, while the linear system is solved by a direct solver. We consider the two-dimensional domain $\Omega = (0,1)^2$. In the experiments, we fix $\gamma = 10^{-4}, h=10^{-2}, \tau=10^{-3}$ and use the following functions that appear in the models:
\begin{align*}
	f(\phi) &= \tfrac{1}{4}\phi^2(1-\phi)^2, & m(\phi)&=5\max\{\phi^2(1-\phi)^2,0\}+ 10^{-6}, \\
	\alpha(\phi) &= \exp(\phi\ln(\alpha_1) + (1 - \phi)\ln(\alpha_0)), & \eta(\phi) &= \exp(\phi\ln(\eta_1) + (1 - \phi)\ln(\eta_0)),
\end{align*}
where $\alpha_1=1, \alpha_0=10^{-2} , \eta_1=10^{-2}, \eta_0=10^{-4}$. We consider the following initial data:
\begin{align*}
	\phi_0(x,y) &= 0.4 + \mathcal{U}(x,y), \qquad \mathcal{U}(x,y)\in[-10^{-3},10^{-3}], \qquad \vv_0(x,y) = (0,0)^\top.\label{eq:exp1}
\end{align*}
In Figure \ref{fig:evol}, we compare the temporal evolution of all three models. Until $t = 0.3$, all three models produce very similar solutions, corresponding to the stage in which the relevant interfaces have formed. At this point, CH relies solely on diffusive dynamics of the droplets, whereas the velocity-dependent models introduce additional transport effects, which in this case accelerate the phase separation.

\begin{figure}[htb!]
    \centering
    \includegraphics[page=1,width=.99\textwidth]{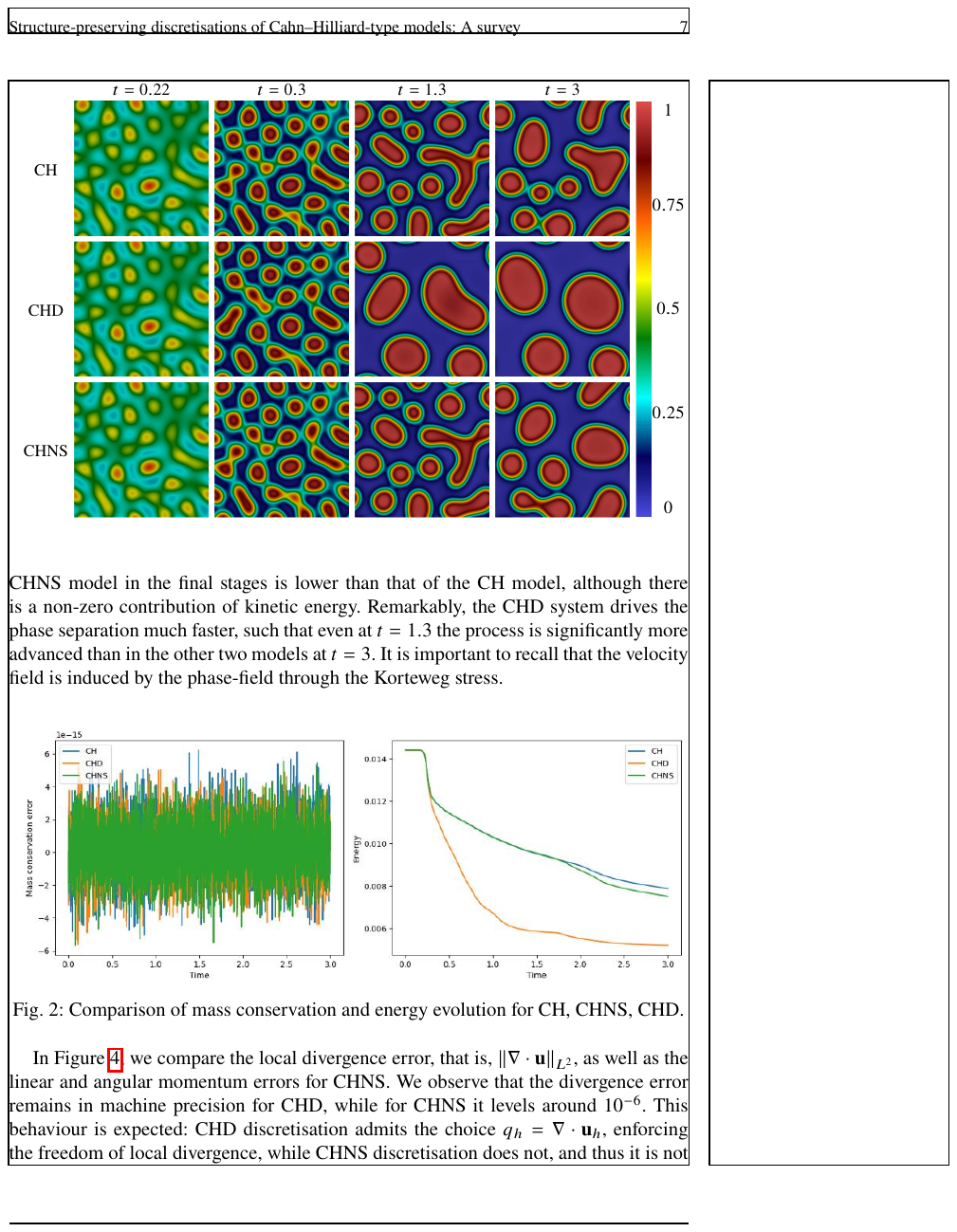}
    \caption{Snapshots of $\phi$ for CH, CHD, CHNS from top to bottom.
    }\vspace{-1em}
    \label{fig:evol}
\end{figure}

In Figure \ref{fig:mass_energy}, we compare the temporal evolution of the energy and the mass conservation error. As predicted by Theorem \ref{thm:main}, the conservation of mass holds up to machine accuracy, while the energy is dissipative. It is shown in Figure \ref{fig:mass_energy} that the energy of the CHNS model in the final stages is lower than that of the CH model, although there is a non-zero contribution of kinetic energy. Remarkably, the CHD system drives the phase separation much faster, such that even at $t = 1.3$ the process is significantly more advanced than in the other two models at $t=3$. It is important to recall that the velocity field is induced by the phase-field through the Korteweg stress.

\begin{figure}[htbp!]
\centering
\begin{subfigure}{0.49\textwidth}
\centering
\includegraphics[width=\textwidth]{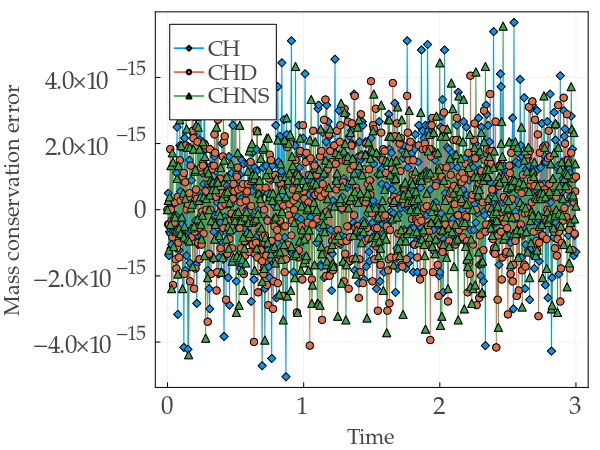}
\end{subfigure}
\begin{subfigure}{0.49\textwidth}
\centering
\includegraphics[width=\textwidth]{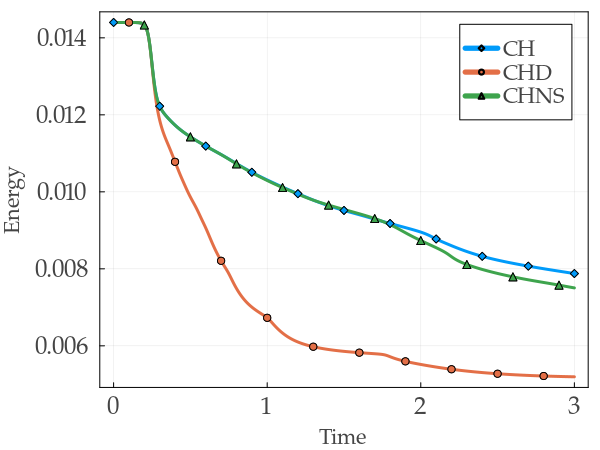}
\end{subfigure}
\caption{Comparison of mass conservation and energy evolution for CH, CHNS, CHD.}\vspace{-1em}
\label{fig:mass_energy}
\end{figure}

In Figure \ref{fig:special}, we compare the local divergence error, that is, $\lVert \nabla \cdot \vv \rVert_{L^2}$,  as well as the linear and angular momentum errors for CHNS. We observe that the divergence error remains in machine precision for CHD, while for CHNS it levels around $10^{-6}$. This behaviour is expected: CHD discretisation admits the choice $q_h = \nabla \cdot \mathbf{v}_h$, enforcing the freedom of local divergence, while CHNS discretisation does not, and thus it is not locally divergence free. Furthermore, linear and angular momentum are not preserved in CHNS, since the scheme is not designed to conserve them. \vspace{-1em}

\begin{figure}[htbp!]
\centering
\begin{subfigure}{0.49\textwidth}
\centering
\includegraphics[width=\textwidth]{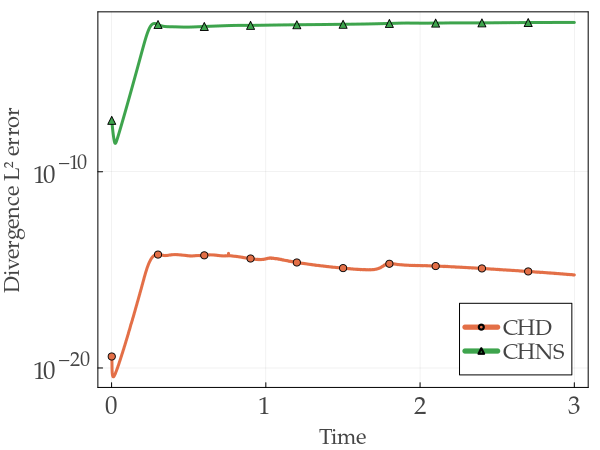}
\end{subfigure}
\begin{subfigure}{0.49\textwidth}
\centering
\includegraphics[width=\textwidth]{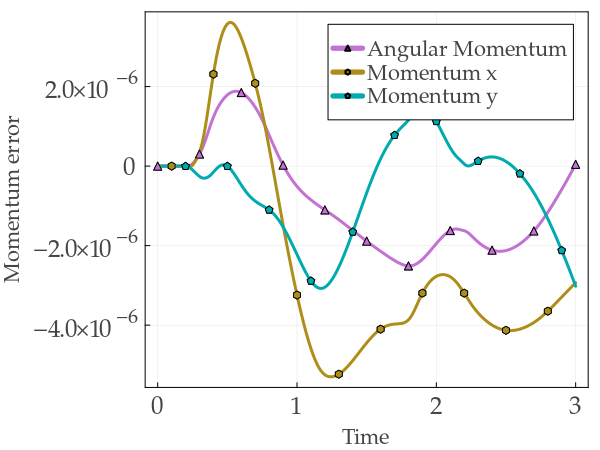}
\end{subfigure}
\caption{Comparison of local divergence-freedom for CHD and CHNS on logarithmic scale (left) and conservation errors of linear and angular momentum for CHNS (right).}\vspace{-1em}
\label{fig:special}
\end{figure}

\begin{acknowledgement}
\noindent The work of A.B. was supported by the Deutsche Forschungsgemeinschaft (DFG) via TRR 146, project number 233630050 and together with D.H. via SPP 2256 project number 441153493. MtE was supported by the DFG via project EI 1210/5-1, number 566600860. The author M.F.~is supported by the State of Upper Austria.
D.T.~was supported by the DFG through the Gra\-duier\-ten\-kolleg 2339 IntComSin (Project-ID 321821685), and by the Swedish Research Council (grant no.~2021-06594) during his stay at Institut Mittag-Leffler, Sweden, in 2025.
\end{acknowledgement}
\bibliographystyle{spmpsci}
\bibliography{literature.bib}

\end{document}